\title{\LARGE \bf
Geometric Convergence of Distributed Gradient Play in Games with Unconstrained Action Sets}
\author{Tatiana Tatarenko and Angelia Nedi\'c
\thanks{Tatiana Tatarenko is with the Control Methods and Robotics Lab,
        TU darmstadt, Germany.}
\thanks{Angelia Nedi\'c is with School of Electrical, Computer and Energy Engineering,
        Arizona State University, USA.
        }
\thanks{The work has been partially supported by Office of Naval Research grant no. N00014-12-1-0998.}
}
\definecolor{darkblue}{rgb}{0,0,1}
\newcommand{\smax}[1]{{\sigma_{\max}\{#1\}}}
\newcommand{\lminnz}[1]{{\tilde{\lambda}_{\min}\{#1\}}}
\newtheorem{theorem}{Theorem}
\newtheorem{definition}{Definition}
\newtheorem{lemma}{Lemma}
\newtheorem{remark}{Remark}
\newtheorem{assumption}{Assumption}
\newcommand{\R}{{\mathbb{R}}}
\newcommand{\bx}{{\mathbf{x}}}
\newcommand{\bbx}{{\bar{\mathbf{x}}}}
\newcommand{\by}{{\mathbf{y}}}
\newcommand{\bz}{{\mathbf{z}}}
\newcommand{\Diag}{{\mathrm{Diag}}}
\newcommand{\di}{{\mathrm{diag}}}
\newcommand{\A}{{\mathcal{A}}}
\newcommand{\Gra}{{\mathcal{G}}}
\newcommand{\tx}{{\tilde{x}}}
\newcommand{\Om}{{\Omega}}
\newcommand{\one}{{\mathbf{1}}}
\newcommand{\N}{{\mathcal{N}}}
\newcommand{\bF}{{\mathbf{F}}}
\newcommand{\tbF}{{\tilde{\mathbf{F}}}}
\newcommand{\Fro}{{\mathrm{Fro}}}
\newcommand{\trace}{{\mathrm{trace}}}
\newcommand{\T}{{\mathrm{T}}}
\begin{document}
\maketitle

\begin{abstract}                          
We provide a distributed algorithm to learn a Nash equilibrium in a class of non-cooperative games with strongly monotone mappings and unconstrained action sets. Each player has access to her own smooth local cost function and can communicate to her neighbors in some undirected graph. We consider a distributed communication-based gradient algorithm. 
For this procedure, we prove geometric convergence to a Nash equilibrium. 
In contrast to our previous works \cite{Cdc2018_TatShiNed, GRANE}, where the proposed algorithms required two parameters to be set up and the analysis was based on a so called augmented game mapping, the procedure in this work corresponds to a standard distributed gradient play and, thus, only one constant step size parameter needs to be chosen appropriately to guarantee fast convergence to a game solution. 
Moreover, we provide a rigorous comparison between the convergence rate of the proposed distributed gradient play and the rate of the GRANE algorithm presented in \cite{GRANE}. It allows us to demonstrate that the distributed gradient play outperforms the GRANE in terms of convergence speed.
\end{abstract}

\section{Introduction}
There are a lot of multi-agent systems, where agents' objective functions are coupled through decision variables of all agents in a system. In such cases, game theory is a useful tool to deal with the corresponding optimization problems. The applications of game theory in engineering can be found, for example, in electricity markets, power systems,  flow control problems, and communication networks \cite{Alpcan2005, BasharSG, Scutaricdma}. Desirable outcomes in games are characterized by so called Nash equilibria, which correspond to a stable state from which no agent has motivation to deviate. This paper provides a distributed discrete-time algorithm applicable to fast Nash equilibrium seeking in a class of non-cooperative games under the assumption that agents exchange their local information with neighbors by means of some communication topology. 

Distributed communication-based algorithms are proposed for \textit{aggregative games} \cite{Koshal2012, paccagnan2016aggregative}. Communication protocols are applied to different classes of games with some convergence guarantees \cite{salehisadaghiani2016distributed, ADMM_WS2017, TatAut19}. The work \cite{salehisadaghiani2016distributed}  proposes a gradient based gossip algorithm to learn Nash equilibria in games. Under some technical assumptions, this algorithm converges almost surely to the Nash equilibrium, given a diminishing step size.  Under further assumption of strong convexity, with some constant step size $\alpha$, the algorithm converges to an $O(\alpha)$ neighborhood of the Nash equilibrium in average. The work \cite{ADMM_WS2017} develops an algorithm within the framework of inexact-ADMM and proves its convergence to the Nash equilibrium with the rate $o(1/k)$ under cocoercivity of the game mapping. However, no aforementioned work aims to provide algorithms which converge to a Nash equilibrium 
with a fast geometric rate.

The paper \cite{Cdc2018_TatShiNed} leverages the idea of an accelerated approach for solving variational inequalities \cite{Nesterov} and provides a version of the gradient play algorithm (Acc-GRANE) that guaratees a fast convergence to the Nash equilibrium with an explicitly good dependence on the condition number. The analysis is based on strong monotone properties of a so called augmented mapping which takes into account not only the gradients of the cost functions, but also the communication settings. The presented algorithm is applicable only to a sub-class of games characterized by a restrictive connection between the number of players, Lipschitz constant, and parameter of strong monotonicity.
To apply the distributed gradient play algorithm to a broader class of games, work \cite{GRANE} considers the case of the restricted strongly monotone augmented game mapping and demonstrates geometric convergence of the procedure to the Nash equilibrium. However, both types of the procedures mentioned above require a careful set up not only for the step size parameter but also for the augmented mapping. In this paper we provide a distributed gradient play whose convergence properties are not based on the augmented mapping. This fact allows us to focus only on the choice of the step size in the optimization procedure. Moreover, a rigorous comparison between the convergence rates of the proposed distributed gradient play and the GRANE in \cite{GRANE} demonstrates that the algorithm presented in this paper converges faster to a Nash equilibrium under the same settings of a game.

This paper is organized as follows. In Section \ref{sec:problem},  we set up the game under consideration. In Section \ref{sec:main}, we present the distributed gradient play algorithm to seek its solutions. In Section \ref{sec:proof}, we prove the main result stating a geometric convergence of the proposed procedure. Section \ref{sec:algorithms} compares the convergence rates of the proposed algorithm and the GRANE from work \cite{GRANE}. We provide a numerical case study in Section \ref{sec:sim}. In Section \ref{sec:conclusion}, we summarize the result and discuss future work.

\textbf{Notations.}
The set $\{1,\ldots,n\}$ is denoted by $[n]$.
For any function $f:K\to\R$, $K\subseteq\R^n$, $\nabla_i f(x) = \frac{\partial f(x)}{\partial x_i}$ is the partial derivative taken in respect to the $i$th coordinate of the vector variable $x\in\R^n$.
For any real vector space $\tilde E$ its dual space is denoted by $\tilde E^*$ and the inner product is denoted by $\langle u,v \rangle$, $u\in\tilde E^*$, $v\in \tilde E$. An operator $B:\tilde E\to\tilde E^*$ is positive definite if $\langle Bv,v \rangle>0$ for all $v\in\tilde E\setminus \{0\}$. An operator $B:\tilde E\to\tilde E^*$ is self-adjoint if $\langle Bv,v' \rangle = \langle Bv',v \rangle$ for all $v',v\in\tilde E$. 
Given a positive definite and self-adjoint operator $B$, we define the Euclidean norm on $\tilde E$ induced by $B$ as $\|v\| = \langle Bv,v \rangle^{1/2}$.
Any mapping $g:\tilde E\to \tilde E^*$ is said to be \emph{strongly monotone with the constant} $\mu>0$ on $Q\subseteq \tilde E$, if $\langle g(u)-g(v), u - v \rangle\ge\mu\|u - v\|^2$ for any $u,v\in Q$, where $\|\cdot\|$ is the corresponding norm in $\tilde E$.
We consider real vector space $E$, which is either space of real vectors $E = E^* = \R^n$ or the space of real matrices $E = E^* = \R^{n\times n}$. In the case $E = \R^{n\times n}$ the inner product $\langle u,v \rangle \triangleq \sqrt{\trace(u^Tv)}$ is the Frobenius inner product on $\R^{n\times n}$.
In the case $E = \R^n$ we use $\|\cdot\|$ to denote the Euclidean norm induced by the standard dot product in $\R^n$, whereas  in the case $E = \R^{n\times n}$ we use $\|\cdot\|_{\Fro}$ to denote the Frobenius norm induced by the Frobenius inner product i.e. $\|v\|_{\Fro} \triangleq \sqrt{\trace(v^Tv)}$. 
 The largest singular value of a matrix $A$ is denoted by
 $\smax{A}$.
 The smallest \emph{nonzero} eigenvalue of a positive semidefinite matrix $A\not=0$ is denoted by
 $\lminnz{A}$, which is strictly positive. 
For any matrix $A\in\R^{n\times n}$ we use $\di(A)$ to denote its diagonal vector, i.e. $\di(A) = (a_{11},\ldots, a_{nn})$.
For any vector $a\in\R^n$ we  use $\Diag(a)$ to denote the diagonal matrix with the vector $a$ on its diagonal.
We call a matrix $A$ \emph{consensual}, if it has equal row vectors.

\section{Problem Formulation}\label{sec:problem}
We consider a non-cooperative game between $n$ players with unconstrained action sets. Let $J_i$ and $\Om_i = \R$ denote respectively the cost function and the feasible action set of the player $i$\footnote{All results below are applicable for games with different dimensions $\{d_i\}$ of the action sets, i.e., $\Omega_i=R^{d_i}$ for all $i$. The one-dimensional case is considered for the sake of notation simplicity.}. Each function $J_i(x_i,x_{-i})$, $i\in[n]$, depends on $x_i$ and $x_{-i}$, where $x_i\in\R$ is the action of the player $i$ and $x_{-i}\in\Om_{-i}=\R^{n-1}$ denotes the joint action of all players except for the player $i$. 
Overall in this paper, we assume that the cost function $J_i(x_i, x_{-i})$ is continuously differentiable in $x_i$ for each fixed $x_{-i}$, $i\in[n]$. Then we define the game mapping as 
\begin{align}\label{eq:gamemapping}
&\bF(x)\triangleq\left[\nabla_1 J_1(x_1,x_{-1}), \ldots, \nabla_n J_n(x_n,x_{-n})\right]^T,
 \end{align}
 where $\nabla_i J_i(x_i,x_{-i}) = \frac{\partial J_i(x_i,x_{-i})}{\partial x_i}$ for all $i\in[n]$.
We assume that the players can interact over an undirected communication graph $\Gra([n],\A)$. The set of nodes is the set of the player $[n]$ and the set of undirected arcs $\A$ is such that $(i,j)\in\A$ if and only if $(j,i)\in\A$, i.e. there is an undirected communication link between $i$ to $j$. 
Thus, some information (message) can be passed from the player $i$ to the player $j$ and vice versa. For each player $i$ the set $\N_i$ is the set of neighbors in the graph $\Gra([n],\A)$, namely $\N_{i}\triangleq\{j\in[n]: \, (i,j)\in\A\}$.
Let us denote the game introduced above by $\Gamma(n,\{J_i\},\{\Om_i=\R\},\Gra)$.
We make the following assumptions regarding the game $\Gamma$.

\begin{assumption}\label{assum:convex}
 The game mapping $\bF(x)$ is \emph{strongly monotone} on $\R^n$ with the constant $\mu>0$.
\end{assumption}
Note that Assumption~\ref{assum:convex} above implies strong convexity of each cost function $J_i(x_i, x_{-i})$ in $x_i$ for any fixed $x_{-i}$ with the constant $\mu$. 

\begin{assumption}\label{assum:Lipschitz}
Each function $\nabla_i J_i(\cdot):\R^n\to\R$, $i\in[n]$, is Lipschitz continuous on $\R^{n}$, namely for some constant $L_i\ge 0$, we have $\forall\ x, y\in\R^n$
\begin{align*}
 |\nabla_i J_i(x)-\nabla_i J_i(y)|&\leq L_i\|x-y\|.
\end{align*}
\end{assumption}
\begin{remark}
 An example of games satisfying Assumption~\ref{assum:Lipschitz} above is a class of aggregative games \cite{Koshal2012, paccagnan2016aggregative}, where each cost function  $J_i$ is of the following form: 
 \[J_i(x_i,x_{-i}) = c_i(x_i) - x_iU_i(\sum_{j=1}^n x_j).\]
 Here $c_i(\cdot):\R\to\R$ is an agent specific function with a Lipschitz continuous derivative and the \emph{linear} function $U_i(\sum_{j=1}^n x_j)$ captures the utility associated with aggregate output $\sum_{j=1}^n x_j$.
\end{remark}

Two assumptions above are standard for works aiming to demonstrate geometric convergence of algorithms for computing an equilibrium point in variational inequalities and games. 

Finally, we make the following assumption on the communication graph, which guarantees sufficient information  "mixing" in the network.
\begin{assumption}\label{assum:connected}
The underlying undirected communication graph $\Gra([n],\A)$ is connected. There is a non-negative matrix $W=[w_{ij}]\in\R^{n\times n}$ associated with the graph such that $w_{ij}>0$ if and only if $(i,j)\in\A$. Moreover, $W$ is doubly stochastic, i.e. $\sum_{l=1}^{n}w_{lj} = \sum_{l=1}^{n}w_{il} = 1$, $\forall i,j\in[n]$.
\end{assumption}
\begin{remark}
The weight matrix $W$ from Assumption~\ref{assum:connected} need not be symmetric. Some simple strategies that generate
  symmetric mixing matrices for which Assumption~\ref{assum:connected} holds can be found in Section 2.4 in \cite{Shi2014}.
\end{remark}
Assumption~\ref{assum:connected} implies that the second largest singular value $\sigma$ of $W$ is such that $\sigma\in(0,1)$ and for any $x\in\R^n$ the following average property holds (see \cite{OlshTsits}):
\begin{align}\label{eq:avprop}
 \|Wx-\one\bar{x}\|\le \sigma\|x-\one\bar{x}\|,
\end{align}
where $\bar{x} = \frac{1}{n}\one^T{x}$ is the average of the coordinates of $x$.

One of the stable solutions in any game $\Gamma$ corresponds to a Nash equilibrium defined below.
\begin{definition}\label{def:NE}
 A vector $x^*=[x_1^*,x_2^*,\cdots, x_n^*]^T\in\Om$ is a \emph{Nash equilibrium} if for any $i\in[n]$ and $x_i\in \Om_i$
 $$J_i(x_i^*,x_{-i}^*)\le J_i(x_{i},x_{-i}^*).$$
 \end{definition}

In this work, we are interested in \emph{distributed seeking of a Nash equilibrum} in a game $\Gamma(n,\{J_i\},\{\Om_i=\R\},\Gra)$ for which Assumptions~\ref{assum:convex}-\ref{assum:connected} hold.
Note that under Assumption~\ref{assum:convex}, the game $\Gamma(n,\{J_i\},\{\Om_i=\R\},\Gra)$ has a unique Nash equilibrium \cite{Rosen}. Moreover, as $\Om_i=\R$ and $J_i(x_i,x_{-i})$ is strongly monotone in $x_i$ over $\R$ for all $i\in[n]$ the vector $x^*\in\R^n$ is the unique Nash equilibrium if and only if 
\begin{align}\label{eq:NEcond}
\bF(x^*) = \boldsymbol 0. 
\end{align}

\section{Nash Equilibria Learning in Distributed Settings}\label{sec:main}
To deal with the partial information available to players which is exchanged among them over the communication graph, we assume that each player $i$ maintains a \emph{local variable}
\begin{align}\label{eq:vector}
x_{(i)}=[\tx_{(i)1},\cdots,\tx_{(i)i-1},x_i,\tx_{(i)i+1},\cdots,\tx_{(i)n}]^T\in\R^n,
\end{align}
which is her estimation of the joint action $x=[x_1,x_2,\cdots,x_n]^T$.
Here $\tx_{(i)j}\in\R$ is the player $i$'s estimate of $x_j$ and  $\tx_{(i)i}=x_i\in\Om_i=\R$. Also, we compactly denote the estimates of other players' actions by the player $i$ as
\begin{align}\label{eq:vector1}
\tx_{-i}=[\tx_{(i)1},\cdots,\tx_{(i)i-1},\tx_{(i)i+1},\cdots,\tx_{(i)n}]^T\in\R^{n-1},
\end{align}
and the estimates of the player $j$'s action $x_j$ by all players as
$$\tx_{(:)j}=[\tx_{(1)j},\cdots,\tx_{(j-1)j},x_{j},\tx_{(j+1)j},\cdots,\tx_{(n)j}]^T\in\R^n.$$
Thus, we can define the estimation matrix $\bx\in\R^{n\times n}$, where the $i$th row is equal to the estimation vector $x_{(i)}$, $i\in[n]$, namely
$$
  \bx\triangleq\left(
     \begin{array}{ccc}
       \textrm{---}& x_{(1)}^\T & \textrm{---} \\
       \textrm{---}& x_{(2)}^\T & \textrm{---} \\
       &\vdots& \\
       \textrm{---}& x_{(n)}^\T & \textrm{---} \\
     \end{array}
   \right).
$$
For any given estimation matrix, we define the diagonal matrix $\tbF(\bx)\in\R^{n\times n}$ with $\tbF(\bx)_{ii}= \nabla_i J_i(x_{(i)})$, $i\in[n]$, namely
\begin{align}\label{eq:diaggrad}
 \tbF(\bx)\triangleq \Diag(\nabla_1 J_1(x_{(1)}),\ldots,\nabla_n J_n(x_{(n)})).
\end{align}

We propose the following distributed gradient play procedure for learning a Nash equilibrium in the game $\Gamma(n,\{J_i\},\{\Om_i=\R\},\Gra)$. 
According to this algorithm, each player $i$ updates its local estimation of the joint action as follows:

\begin{align*}
 &x_{i}^{t+1} = \sum_{j=1}^n w_{ij}x_{(j)i}^{t} - \alpha\nabla_{x_i}J_i(x_{(i)}^{t}), \cr
 &x_{(i)l}^{t+1}= \sum_{j=1}^n w_{ij}x_{(j)l}^{t}, \quad \mbox{for }l\ne i, \, i\in[n].
\end{align*}
Thus, to get a new estimation $x_{(i)}^{t+1}$ each agent $i$ aggregates over the communication graph the current estimations of its neighbors and, makes a local gradient step with a step size $\alpha$ in respect to the gradient of its cost function $\nabla_{x_i}J_i(x_{(i)}^{t})$ calculated at the current local estimation  $x_{(i)}^{t}$.
The local updates above can be represented in the following compact vector form:
\begin{align}\label{eq:alg}
 \bx^{t+1} = W\bx^{t} - \alpha\tbF(\bx^{t}), 
\end{align}
where $\alpha$ is a constant step size to be set up.

In the following we prove geometric convergence of the procedure above to the unique Nash equilibrium in the game $\Gamma(n,\{J_i\},\{\Om_i=\R\},\Gra)$ under Assumptions~\ref{assum:convex}-\ref{assum:connected} and an appropriate choice of $\alpha$. This result is formulated in the following theorem. 


\begin{theorem}\label{th:main}
  Let $\Gamma(n,\{J_i\},\{\Om_i=\R\},\Gra)$ be a game for which Assumptions~\ref{assum:convex}-\ref{assum:connected} hold. Let $\mu$ and $\sigma$ be as defined in Assumption~\ref{assum:convex} and relation \eqref{eq:avprop}, respectively, and the step size parameter $\alpha$ be chosen as follows: 
  \begin{equation*}
  \begin{split} 
     0<\alpha<\min&\left\{1,\frac{\mu}{2L^2}, \frac{\sigma}{2L}\frac{\sqrt {n}}{\sqrt{n-1}}\left(\frac{\sqrt 2}{\sqrt{1+\sigma^2}}-1\right),\right.\\
   &\left.\qquad\frac{n}{\mu}\left(\frac{8}{(\sqrt{1+\sigma^2}-\sqrt 2)^2}-1\right),\right.\\
   &\left.\qquad\qquad\quad\frac{\sqrt{n^2+\frac{2\mu^4(1-\sigma^2)}{(n-1)L^4(1+\sigma^2)}}-n}{2\mu}\right\},
   \end{split}
 \end{equation*}
 where $L$ is the Lipschitz constant defined in Assumption~\ref{assum:Lipschitz}.
  Then, the algorithm \eqref{eq:alg} converges to the consensual matrix $\bx^*$ whose rows are equal to the row-vector $x^*$ which is the unique Nash equilibrium in the game $\Gamma(n,\{J_i\},\{\Om_i=\R\},\Gra)$. Moreover,
  \[\|\bx^{t}-\bx^*\|^2_{\Fro}\le O(q^t)\]
  for some $q = q(\alpha)\in(0,1)$. 
  \end{theorem}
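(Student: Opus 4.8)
The plan is to measure the error of $\bx^t$ by two scalars --- the \emph{disagreement} of the local estimates and the distance of their \emph{average} from the Nash point --- and to show that, for $\alpha$ in the stated window, this pair contracts linearly. Put $\bbx^t\triangleq\frac1n\one^\T\bx^t\in\R^n$ (the average of the rows of $\bx^t$), $d^t\triangleq\|\bx^t-\one(\bbx^t)^\T\|_{\Fro}$ and $e^t\triangleq\|\bbx^t-x^*\|$, where $x^*$ is the unique Nash equilibrium, so that $\bF(x^*)=\zero$ by \eqref{eq:NEcond} and $\bx^*=\one(x^*)^\T$. The two summands of $\bx^t-\bx^*=(\bx^t-\one(\bbx^t)^\T)+\one(\bbx^t-x^*)^\T$ are Frobenius-orthogonal, hence $\|\bx^t-\bx^*\|_{\Fro}^2=(d^t)^2+n(e^t)^2$, so it suffices to prove geometric decay of $d^t$ and of $e^t$.

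I would first set up two coupled recursions. Left-multiplying \eqref{eq:alg} by $\frac1n\one^\T$ and using double stochasticity of $W$ together with the diagonality of $\tbF(\bx^t)$ gives $\bbx^{t+1}=\bbx^t-\frac\alpha n\bh^t$, where $\bh^t\in\R^n$, $(\bh^t)_i=\nabla_iJ_i(x_{(i)}^t)$. Writing $\bh^t=\bF(\bbx^t)+(\bh^t-\bF(\bbx^t))$, applying the standard strong-monotonicity-plus-Lipschitz estimate (Assumptions~\ref{assum:convex}--\ref{assum:Lipschitz}, using $\bF(x^*)=\zero$) to $(\bbx^t-x^*)-\frac\alpha n\bF(\bbx^t)$, and bounding $\|\bh^t-\bF(\bbx^t)\|\le\sqrt{\sum_iL_i^2\|x_{(i)}^t-\bbx^t\|^2}\le L\,d^t$, one obtains $e^{t+1}\le\sqrt{1-\tfrac{2\alpha\mu}{n}+\tfrac{\alpha^2L^2}{n^2}}\,e^t+\tfrac{\alpha L}{n}d^t$. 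Similarly, subtracting $\one(\bbx^{t+1})^\T$ from \eqref{eq:alg}, using $W\one=\one$ and applying \eqref{eq:avprop} columnwise, gives $d^{t+1}\le\sigma d^t+\alpha\|\tbF(\bx^t)-\tfrac1n\one\one^\T\tbF(\bx^t)\|_{\Fro}$; since $\tbF(\bx^t)$ is diagonal this last norm equals $\sqrt{\tfrac{n-1}{n}}\,\|\bh^t\|$, and $\|\bh^t\|\le\|\bh^t-\bF(\bbx^t)\|+\|\bF(\bbx^t)-\bF(x^*)\|\le L\,d^t+\LF\,e^t$, whence $d^{t+1}\le\big(\sigma+\alpha L\sqrt{\tfrac{n-1}{n}}\big)d^t+\alpha\LF\sqrt{\tfrac{n-1}{n}}\,e^t$, with $\LF$ the Lipschitz constant of $\bF$ (controlled by $L$ and the $L_i$).

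These two inequalities --- or their squared forms, obtained via a Young inequality --- show that the nonnegative pair $(e^t,d^t)^\T$, equivalently a weighted Lyapunov function $V^t=(e^t)^2+c(d^t)^2$, is dominated by a $2\times2$ nonnegative matrix $A(\alpha)$, and the task reduces to proving $\rho(A(\alpha))<1$. I would then invoke the Schur-stability criterion for nonnegative matrices (diagonal entries below $1$ and $\det(I-A(\alpha))>0$): the five bounds on $\alpha$ in the statement are exactly what make these hold --- the first three force the two diagonal contraction factors below $1$ with a margin, which is where the $\sqrt2$, $\sqrt{1+\sigma^2}$ combinations come from (they keep $\sigma+\alpha L\sqrt{(n-1)/n}$ below $\sqrt{(1+\sigma^2)/2}$), while the last two, quadratic in $\alpha$ and carrying $\mu$, $L^4$ and $1-\sigma^2$, are the determinant / cross-coupling condition. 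Then $V^t\le Cq^t$ with $q=q(\alpha)=\rho(A(\alpha))^2\in(0,1)$, so $\|\bx^t-\bx^*\|_{\Fro}^2=(d^t)^2+n(e^t)^2=O(q^t)$; moreover $d^t\to0$ makes the limit consensual, $e^t\to0$ identifies its common row with $x^*$, and $|x_i^t-x_i^*|\le d^t+e^t\to0$. The hard part is precisely this last step: disagreement feeds the $e$-recursion and the optimization error feeds the $d$-recursion, so the two estimates are genuinely two-way coupled and must be balanced simultaneously; the bookkeeping of $n$, $\sigma$, $\mu$ and the Lipschitz constants needed to collapse $\rho(A(\alpha))<1$ into exactly the stated (and nonempty) step-size window is where the effort goes.
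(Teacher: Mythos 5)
Your proposal is correct and follows essentially the same route as the paper: the same decomposition into the consensus error $\|\bx^t-\bbx^t\|_{\Fro}$ and the average-to-equilibrium error $\|\bbx^t-\bx^*\|_{\Fro}$, the same pair of coupled linear recursions (the paper's Lemmas~\ref{lem:distToRunav}--\ref{lem:distToNE}), and the same reduction to showing that a $2\times 2$ nonnegative coupling matrix has spectral radius below one, with the five step-size bounds serving exactly the roles you describe. The only differences are cosmetic --- the paper computes the eigenvalues explicitly and invokes Perron--Frobenius rather than the Schur/determinant criterion, and its Lemma~\ref{lem:distToNE} evaluates strong monotonicity at $\bar{x}^{t+1}$ to get a $\bigl(1+\tfrac{2\alpha}{n}(\mu-\tfrac{\theta}{2})\bigr)^{-1}$ contraction factor instead of your forward estimate (where, note, the Lipschitz constant of $\bF$ is $L\sqrt{n}$, not $L$, a constant-tracking point that does not affect the structure of the argument).
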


In the next section we provide the proof of the main result formulated in Theorem~\ref{th:main} above.

\section{Proof of Main Result}\label{sec:proof}
Let $\bbx^{t}$ be a consensual matrix with the rows equal to the vector $\bar{x}^{t} = \frac{1}{n}\sum_{i=1}^n x^t                                                                                                                                                                                                                                                  _{(i)}$. This matrix corresponds to the running average of the players' estimations of the current joint action. Under Assumption~\ref{assum:connected}, as seen from the definition of the algorithm in \eqref{eq:alg}, the average $\bar x^t$ evolves according to the following relation:
  \begin{align}\label{eq:runav}
   \bbx^{t+1} = \bbx^{t} - \frac{\alpha}{n}\bF^0(\bx^{t}),
  \end{align}
  where $\bF^0(\cdot)$ is the consensual matrix with the rows equal to $\di(\tbF(\cdot))$.

 To prove Theorem~\ref{th:main}, we start by proving some lemmata. 
 First of all, we estimate the consensus violation term in respect to the running average, namely $\|\bx^{t+1} - \bbx^{t+1}\|_{\Fro}$.
 \begin{lemma}\label{lem:distToRunav}
  Under Assumption~\ref{assum:connected} the following holds for the procedure \eqref{eq:alg}: 
  \[\|\bx^{t+1} - \bbx^{t+1}\|_{\Fro}\le\sigma\|\bx^{t} - \bbx^{t}\|_{\Fro} + \alpha\frac{\sqrt {n-1}}{\sqrt n}\|\tbF(\bx^{t})\|_{\Fro}.\]
 \end{lemma}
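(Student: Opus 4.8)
The plan is to propagate the consensus-violation matrix $\bx^{t}-\bbx^{t}$ through one iteration of \eqref{eq:alg}, splitting the update into the mixing part (which contracts under $W$) and the gradient part, and then to exploit that $\tbF(\bx^{t})$ is \emph{diagonal} in order to obtain the sharp coefficient $\sqrt{(n-1)/n}$ in front of $\|\tbF(\bx^{t})\|_{\Fro}$.

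First I would record that $\bbx^{t}=\tfrac1n\one\one^{\T}\bx^{t}$ by definition, and that double stochasticity of $W$ (Assumption~\ref{assum:connected}) gives $\one^{\T}W=\one^{\T}$, hence $\tfrac1n\one\one^{\T}W\bx^{t}=\bbx^{t}$. Substituting \eqref{eq:alg} into $\bx^{t+1}-\bbx^{t+1}$ and using $\bbx^{t+1}=\tfrac1n\one\one^{\T}\bx^{t+1}$ then yields
\[
\bx^{t+1}-\bbx^{t+1}=\bigl(W\bx^{t}-\bbx^{t}\bigr)-\alpha\Bigl(I-\tfrac1n\one\one^{\T}\Bigr)\tbF(\bx^{t}),
\]
so by the triangle inequality for $\|\cdot\|_{\Fro}$ it suffices to bound the two terms separately. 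For the first term, if $v_j$ denotes the $j$-th column of $\bx^{t}$ and $\bar v_j=\tfrac1n\one^{\T}v_j$, then the $j$-th columns of $W\bx^{t}-\bbx^{t}$ and of $\bx^{t}-\bbx^{t}$ are $Wv_j-\one\bar v_j$ and $v_j-\one\bar v_j$ respectively; applying the scalar averaging property \eqref{eq:avprop} to each column and summing the squared column norms over $j\in[n]$ gives $\|W\bx^{t}-\bbx^{t}\|_{\Fro}\le\sigma\|\bx^{t}-\bbx^{t}\|_{\Fro}$.

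For the second term I would use that $\tbF(\bx^{t})=\Diag(\nabla_1 J_1(x_{(1)}^{t}),\dots,\nabla_n J_n(x_{(n)}^{t}))$ is diagonal. Writing $d_j\triangleq\tbF(\bx^{t})_{jj}$ and letting $e_j$ be the $j$-th standard basis vector of $\R^n$, the $j$-th column of $(I-\tfrac1n\one\one^{\T})\tbF(\bx^{t})$ is $d_j(e_j-\tfrac1n\one)$, whose squared Euclidean norm is $d_j^{2}\bigl((1-\tfrac1n)^{2}+(n-1)\tfrac1{n^{2}}\bigr)=\tfrac{n-1}{n}\,d_j^{2}$; summing over $j$ gives $\|(I-\tfrac1n\one\one^{\T})\tbF(\bx^{t})\|_{\Fro}=\sqrt{\tfrac{n-1}{n}}\,\|\tbF(\bx^{t})\|_{\Fro}$. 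Combining the two estimates proves the lemma. I do not expect a genuine obstacle here; the only point that needs care is that this last step really must use the coordinatewise (diagonal) structure of $\tbF$ — the operator-norm bound $\|I-\tfrac1n\one\one^{\T}\|_{2}=1$ would yield only the coefficient $1$ rather than the required $\sqrt{(n-1)/n}$.
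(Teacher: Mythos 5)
Your proof is correct and follows essentially the same route as the paper's: split $\bx^{t+1}-\bbx^{t+1}$ into the mixing part, contracted column-by-column via \eqref{eq:avprop}, and the gradient part, whose Frobenius norm is computed exactly using the diagonal structure of $\tbF(\bx^{t})$ (the paper writes this as $\tfrac{\alpha}{n}\|n\tbF(\bx^{t})-\bF^0(\bx^{t})\|_{\Fro}=\alpha\sqrt{\tfrac{n-1}{n}}\|\tbF(\bx^{t})\|_{\Fro}$, which is identical to your $\alpha\|(I-\tfrac1n\one\one^{\T})\tbF(\bx^{t})\|_{\Fro}$ computation). Your closing remark about why the naive operator-norm bound would lose the factor $\sqrt{(n-1)/n}$ is a correct and worthwhile observation, though not needed for the result.
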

 \begin{proof}
 Taking into account \eqref{eq:avprop} and \eqref{eq:runav}, we conclude that
  \begin{align*}
   \|\bx^{t+1} &- \bbx^{t+1}\|_{\Fro} \cr
   &= \|W\bx^{t} - \alpha\tbF(\bx^{t}) - \bbx^{t} + \frac{\alpha}{n}\bF^0(\bx^{t})\|_{\Fro}\cr
   &\le \sigma\|\bx^{t} - \bbx^{t}\|_{\Fro} + \frac{\alpha}{n}\|n\tbF(\bx^{t})-\bF^0(\bx^{t})\|_{\Fro}.
  \end{align*}
 Next, note that 
 \begin{align*}
  &n\tbF(\bx^{t})-\bF^0(\bx^{t}) \cr
  &= \begin{pmatrix} 
(n-1)\nabla_1 J_1(x_{(1)}^{t})  & \ldots & -\nabla_n J_n(x_{(n)}^{t})\\
-\nabla_1 J_1(x_{(1)}^{t})  & \ldots & -\nabla_n J_n(x_{(n)}^{t})\\
\vdots & \ddots & \vdots & \\
-\nabla_1 J_1(x_{(1)}^{t})  & \ldots & -(n-1)\nabla_n J_n(x_{(n)}^{t})
\end{pmatrix} 
 \end{align*}
and, hence, 
\begin{align*}\|n\tbF(\bx^{t})-\bF^0(\bx^{t})\|_{\Fro} &= \sqrt{n(n-1)\sum_{i=1}^n(\nabla_i J_i(x_{(i)}^{t}))^2}\cr
 & = \sqrt{n(n-1)}\|\tbF(\bx^{t})\|_{\Fro},
\end{align*}
thus yielding the stated result. 
 \end{proof}
 Next, to estimate the distance $\|\bx^{t+1} - \bbx^{t+1}\|_{\Fro}$ in terms of optimum violation $\|\bx^{t}-\bx^*\|_{\Fro}$, we upper bound $\|\tbF(\bx^{t})\|_{\Fro}$ in the following lemma.
 
\begin{lemma}\label{lem:grad}
 Let Assumption~\ref{assum:Lipschitz} hold in the game $\Gamma(n,\{J_i\},\{\Om_i=\R\},\Gra)$. Then  
 \[\|\tbF(\bx^{t})\|_{\Fro}\le L\|\bx^{t}-\bx^*\|_{\Fro},\]
 where $L = \max_{i} L_i$ and $L_i$, $i\in[n]$, are the Lipschitz constants from Assumption~\ref{assum:Lipschitz}.
\end{lemma}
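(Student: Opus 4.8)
The plan is to exploit the diagonal structure of $\tbF(\bx^{t})$ together with the Nash-equilibrium optimality condition $\bF(x^*)=\zero$ and the coordinate-wise Lipschitz bounds of Assumption~\ref{assum:Lipschitz}. First I would write the Frobenius norm out explicitly: since $\tbF(\bx^{t})=\Diag(\nabla_1 J_1(x_{(1)}^{t}),\ldots,\nabla_n J_n(x_{(n)}^{t}))$ is a diagonal matrix,
\[
\|\tbF(\bx^{t})\|_{\Fro}^2=\sum_{i=1}^n\bigl(\nabla_i J_i(x_{(i)}^{t})\bigr)^2 .
\]

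Next I would observe that the $i$th row of the consensual matrix $\bx^*$ is exactly $x^*$, and that the characterization \eqref{eq:NEcond} of the unique Nash equilibrium gives $\nabla_i J_i(x^*)=0$ for every $i\in[n]$. Hence each summand can be viewed as an increment of $\nabla_i J_i$, namely $\nabla_i J_i(x_{(i)}^{t})=\nabla_i J_i(x_{(i)}^{t})-\nabla_i J_i(x^*)$, so Assumption~\ref{assum:Lipschitz} applied to the map $\nabla_i J_i:\R^n\to\R$ yields $|\nabla_i J_i(x_{(i)}^{t})|\le L_i\|x_{(i)}^{t}-x^*\|$.

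Then I would sum these inequalities and bound $L_i\le L=\max_i L_i$, obtaining
\[
\|\tbF(\bx^{t})\|_{\Fro}^2\le\sum_{i=1}^n L_i^2\|x_{(i)}^{t}-x^*\|^2\le L^2\sum_{i=1}^n\|x_{(i)}^{t}-x^*\|^2=L^2\|\bx^{t}-\bx^*\|_{\Fro}^2 ,
\]
where the last equality is just the expression of the Frobenius norm through the rows of $\bx^{t}-\bx^*$. Taking square roots gives the claim. I do not expect a genuine obstacle here; the only point that needs care is the bookkeeping that the $i$th diagonal entry of $\tbF(\bx^{t})$ depends solely on the $i$th row $x_{(i)}^{t}$ of $\bx^{t}$, and that the matching $i$th row of $\bx^*$ equals $x^*$ — this is precisely where the equilibrium condition $\bF(x^*)=\zero$ enters to turn each gradient value into a true increment of $\nabla_i J_i$ that can be controlled by Lipschitz continuity.
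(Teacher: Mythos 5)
Your proposal is correct and follows essentially the same route as the paper's proof: both use the equilibrium condition $\bF(x^*)=\boldsymbol 0$ to rewrite each diagonal entry $\nabla_i J_i(x_{(i)}^{t})$ as the increment $\nabla_i J_i(x_{(i)}^{t})-\nabla_i J_i(x^*)$, apply the coordinate-wise Lipschitz bound of Assumption~\ref{assum:Lipschitz} row by row, and sum after bounding $L_i\le L$. The only cosmetic difference is that you square first and take the square root at the end, whereas the paper works directly with the square-root expression (and in fact your version avoids a small typo in the paper, where an ``$=$'' appears in place of the intended ``$\le$'' when the Lipschitz bound is inserted).
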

\begin{proof}
 Due to Assumption~\ref{assum:Lipschitz} and the fact that $\bF(\bx^*) = \boldsymbol 0$ (see \eqref{eq:NEcond}),
 \begin{align*}\|\tbF(\bx^{t})\|_{\Fro} &= \|\tbF(\bx^{t})-\tbF(\bx^*)\|_{\Fro}\cr
 &=\sqrt{\sum_{i=1}^n(\nabla_i J_i(x_{(i)}^{t})-\nabla_i J_i(x^*))^2}\cr
 &=\sqrt{\sum_{i=1}^nL_i^2\|x_{(i)}^{t}-x^*\|^2} \le L\sqrt{\sum_{i=1}^n\|x_{(i)}^{t}-x^*\|^2} \cr
 &= L\|\bx^{t}-\bx^*\|_{\Fro}.
 \end{align*}
\end{proof}

Finally, we analyze the distance between the running average and the Nash equilibrium. 
\begin{lemma}\label{lem:distToNE}
 Let Assumptions~\ref{assum:convex}-\ref{assum:connected} hold in the game $\Gamma(n,\{J_i\},\{\Om_i=\R\},\Gra)$.
 Then for any $\theta>0$ and the step size $\alpha\le\frac{\theta}{L^2}$ the following inequality holds:
  \begin{align*}
  \left(1+\frac{2\alpha}{n}\left(\mu-\frac{\theta}{2}\right)\right)&\|\bbx^{t+1}-\bx^*\|^2_{\Fro} \cr
  &\le\|\bbx^{t}-\bx^*\|^2_{\Fro} + \frac{L^2\alpha}{\theta}\|\bx^{t}-\bbx^{t}\|^2_{\Fro}.
  \end{align*}
\end{lemma}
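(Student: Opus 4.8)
The plan is to collapse the matrix recursion \eqref{eq:alg}--\eqref{eq:runav} to a one-step inequality for the common rows, and then run a standard inexact (perturbed) gradient-descent argument for a strongly monotone operator, with one twist: the $O(\alpha^2)$ step term is \emph{absorbed} rather than discarded, and this is exactly what forces the step-size restriction $\alpha\le\theta/L^2$ and produces the somewhat unusual multiplicative factor on the left-hand side of the claim. Concretely, since $\bbx^{t}$, $\bx^*$ and $\bF^0(\bx^{t})$ are all consensual, I would write $\bar x^{t},\,x^*,\,h^{t}\triangleq\di(\tbF(\bx^{t}))\in\R^n$ for their common rows, so that $\|\bbx^{t}-\bx^*\|_{\Fro}^2=n\|\bar x^{t}-x^*\|^2$ (same for $t{+}1$) and, by \eqref{eq:runav}, $\bar x^{t+1}=\bar x^{t}-\tfrac{\alpha}{n}h^{t}$. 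Everything then reduces to bounding $\|\bar x^{t+1}-x^*\|^2$ (here $\|\cdot\|$ is the Euclidean norm on $\R^n$, the one in which Assumption~\ref{assum:convex} is stated).

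First I would expand $\|\bar x^{t+1}-x^*\|^2$ from $\bar x^{t+1}-x^*=(\bar x^{t}-x^*)-\tfrac{\alpha}{n}h^{t}$, and then substitute $\langle h^{t},\bar x^{t}-x^*\rangle=\langle h^{t},\bar x^{t+1}-x^*\rangle+\tfrac{\alpha}{n}\|h^{t}\|^2$ to rewrite the cross term at the \emph{new} iterate, obtaining the identity
\[\|\bar x^{t+1}-x^*\|^2=\|\bar x^{t}-x^*\|^2-\tfrac{\alpha^2}{n^2}\|h^{t}\|^2-\tfrac{2\alpha}{n}\langle h^{t},\bar x^{t+1}-x^*\rangle.\]
Evaluating the inner product at $\bar x^{t+1}$ is deliberate: it leaves a \emph{negative} $\|h^{t}\|^2$ term available for later, and it lets me apply strong monotonicity at precisely the iterate that will carry the coefficient in the statement.

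Next I would lower-bound the inner product by splitting $\langle h^{t},\bar x^{t+1}-x^*\rangle=\langle\bF(\bar x^{t+1}),\bar x^{t+1}-x^*\rangle+\langle h^{t}-\bF(\bar x^{t+1}),\bar x^{t+1}-x^*\rangle$. For the first summand, Assumption~\ref{assum:convex} together with $\bF(x^*)=\zero$ (relation \eqref{eq:NEcond}) gives $\ge\mu\|\bar x^{t+1}-x^*\|^2$; for the second, Cauchy--Schwarz and Young's inequality with parameter $\theta$ give $\ge-\tfrac{\theta}{2}\|\bar x^{t+1}-x^*\|^2-\tfrac{1}{2\theta}\|h^{t}-\bF(\bar x^{t+1})\|^2$. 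It remains to control $\|h^{t}-\bF(\bar x^{t+1})\|^2$: coordinatewise, Assumption~\ref{assum:Lipschitz} yields $\|h^{t}-\bF(\bar x^{t+1})\|^2\le L^2\sum_{i=1}^n\|x_{(i)}^{t}-\bar x^{t+1}\|^2=L^2\|\bx^{t}-\bbx^{t+1}\|_{\Fro}^2$, and since the rows of $\bx^{t}-\bbx^{t}$ sum to zero while $\bbx^{t}-\bbx^{t+1}$ is consensual, their Frobenius inner product vanishes, so $\|\bx^{t}-\bbx^{t+1}\|_{\Fro}^2=\|\bx^{t}-\bbx^{t}\|_{\Fro}^2+n\|\bar x^{t}-\bar x^{t+1}\|^2=\|\bx^{t}-\bbx^{t}\|_{\Fro}^2+\tfrac{\alpha^2}{n}\|h^{t}\|^2$.

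Substituting all of this back, the terms proportional to $\|h^{t}\|^2$ collect into $\tfrac{\alpha^2}{n^2}\big(\tfrac{\alpha L^2}{\theta}-1\big)\|h^{t}\|^2$, which is $\le 0$ exactly when $\alpha\le\theta/L^2$ — the hypothesis of the lemma — and hence may be dropped; moving the $\mu$-term (with the $\tfrac{\theta}{2}$ correction) to the left produces the factor $1+\tfrac{2\alpha}{n}(\mu-\tfrac{\theta}{2})$, and multiplying through by $n$ restores the Frobenius norms of the matrix iterates, which is precisely the claimed inequality. I expect the only delicate point to be this last bookkeeping: the Lipschitz bound on $\|h^{t}-\bF(\bar x^{t+1})\|^2$ does not reduce cleanly to the consensus term because $\bar x^{t+1}\neq\bar x^{t}$, and it is the leftover $\|h^{t}\|^2$ it generates that dictates evaluating the inner product at $\bar x^{t+1}$ in the first step (so that a matching $-\|h^{t}\|^2$ is on hand to cancel it) and that pins down the step-size condition $\alpha\le\theta/L^2$.
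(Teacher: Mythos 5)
Your proposal is correct and follows essentially the same route as the paper's proof: the same one-step identity with the cross term evaluated at $\bar x^{t+1}$, the same split into a strong-monotonicity term plus a Young-inequality term with parameter $\theta$, the same Lipschitz/orthogonality bookkeeping for $\sum_i\|x_{(i)}^t-\bar x^{t+1}\|^2$, and the same cancellation of the $\|h^t\|^2$ (equivalently $\|\bar x^t-\bar x^{t+1}\|^2$) term under $\alpha\le\theta/L^2$. No substantive differences.
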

\begin{proof}
Let $\hat{\bF} (\bx^t) =(\nabla_1 J_1(x_{(1)}^{t}),\ldots, \nabla_n J_n(x_{(n)}^{t}) )^T\in\R^{n} $.
 Using the equality $\bar{x}^{t+1} = \bar{x}^{t} - \frac{\alpha}{n}\hat{\bF}(\bx^{t})$ (see \eqref{eq:runav}) and the basic inequality $\|a\|^2 = \|a+b\|^2 - 2\langle a,b \rangle - \|b\|^2$ for $a = \bar{x}^{t+1}-x^*$ and $b = \bar{x}^{t} - \bar{x}^{t+1}$, we obtain
 \begin{align}\label{eq:eq1}
  \|\bar{x}^{t+1}-x^*\|^2& = \|\bar{x}^{t}-x^*\|^2 - 2\langle \bar{x}^{t+1}-x^*,\bar{x}^{t} - \bar{x}^{t+1} \rangle \cr
  &\qquad\qquad\qquad- \|\bar{x}^{t} - \bar{x}^{t+1}\|^2\cr
  &= \|\bar{x}^{t}-x^*\|^2  - \frac{2\alpha}{n} \langle \bar{x}^{t+1}-x^*, \hat{\bF}(\bx^{t})\rangle \cr
  &\qquad\qquad\qquad- \|\bar{x}^{t} - \bar{x}^{t+1}\|^2.
 \end{align}
We proceed with estimating the term $\langle \bar{x}^{t+1}-x^*, \hat{\bF}(\bx^{t})\rangle$.
\begin{align}\label{eq:eq2}
 \langle \bar{x}^{t+1}-x^*, \hat{\bF}(\bx^{t})\rangle & = \langle \hat{\bF}(\bx^{t}) - \bF(\bar{x}^{t+1}),\bar{x}^{t+1}-x^* \rangle  \cr
 & \quad + \langle \bF(\bar{x}^{t+1}) - \bF(x^*),\bar{x}^{t+1}-x^* \rangle\cr
 &\ge\langle \hat{\bF}(\bx^{t}) - \bF(\bar{x}^{t+1}),\bar{x}^{t+1}-x^* \rangle\cr
 &\quad + \mu\|\bar{x}^{t+1}-x^*\|^2,
\end{align}
where in the first equality we used the fact that $\bF(x^*)= \boldsymbol 0$ (see \eqref{eq:NEcond}) and in the last inequality we used Assumption~\ref{assum:convex}.
Next, for any $\theta>0$ we obtain
\begin{align}\label{eq:eq3}
 &\langle \hat{\bF}(\bx^{t})  - \bF(\bar{x}^{t+1}),\bar{x}^{t+1}-x^* \rangle\ge -\frac{\theta}{2}\|\bar{x}^{t+1}-x^*\|^2 \cr
 &\qquad\qquad\qquad\qquad\qquad\qquad- \frac{1}{2\theta}\|\hat{\bF}(\bx^{t}) - \bF(\bar{x}^{t+1})\|^2\cr
 &= -\frac{\theta}{2}\|\bar{x}^{t+1}-x^*\|^2 - \frac{1}{2\theta}\sum_{i=1}^n \|\nabla_i J_i(x_{(i)}^{t}) - \nabla_i J_i(\bar{x}^{t+1})\|^2\cr
 &\ge -\frac{\theta}{2}\|\bar{x}^{t+1}-x^*\|^2 - \frac{L^2}{2\theta}\sum_{i=1}^n \|x_{(i)}^{t} - \bar{x}^{t+1}\|^2.
\end{align}
Bringing \eqref{eq:eq2} and \eqref{eq:eq3} into \eqref{eq:eq1} 
we conclude 
that
\begin{align*}
  \|\bar{x}^{t+1}-x^*\|^2& \le \|\bar{x}^{t}-x^*\|^2 - \frac{2\alpha}{n}\left(\mu-\frac{\theta}{2}\right)\| \bar{x}^{t+1}-x^*\|^2 \cr
  &+\frac{\alpha L^2}{n\theta}\sum_{i=1}^n \|x_{(i)}^{t} - \bar{x}^{t+1}\|^2-  \|\bar{x}^{t} - \bar{x}^{t+1}\|^2.
 \end{align*}
Further, taking into account that
$\sum_{i=1}^n \|x_{(i)}^{t} - \bar{x}^{t+1}\|^2 = \sum_{i=1}^n \|x_{(i)}^{t} - \bar{x}^{t}\|^2 + n\|\bar{x}^{t} - \bar{x}^{t+1}\|^2$, we see that
\begin{align}\label{eq:eq4}
  \|\bar{x}^{t+1}&-x^*\|^2 \le \|\bar{x}^{t}-x^*\|^2 - \frac{2\alpha}{n}\left(\mu-\frac{\theta}{2}\right)\| \bar{x}^{t+1}-x^*\|^2 \cr
  &+\frac{\alpha L^2}{n\theta}\sum_{i=1}^n \|x_{(i)}^{t} - \bar{x}^{t}\|^2 + (\frac{\alpha L^2}{\theta}-1)\|\bar{x}^{t} - \bar{x}^{t+1}\|^2.
 \end{align}
Next, taking into that $\alpha<\frac{\theta}{L^2}$,
$\sum_{i=1}^n \|x_{(i)}^{t} - \bar{x}^{t}\|^2 = \|\bx^t - \bbx^{t}\|^2_{\Fro}$, and
that for any consensual matrices $\bx\in\R^{n\times n}$, $\by\in\R^{n\times n}$ with the vectors $x,y\in\R^{n}$ as their rows respectively we have 
$\|x-y\|^2 = \frac{1}{n}\|\bx-\by\|^2_{\Fro}$, we get from \eqref{eq:eq4} 
\begin{align*}
 &\left(1+\frac{2\alpha}{n}\left(\mu-\frac{\theta}{2}\right)\right)\|\bbx^{t+1}-\bx^*\|^2_{\Fro}  \cr
 &\le \|\bbx^{t}-\bx^*\|^2_{\Fro} + \frac{L^2\alpha}{\theta}\|\bx^{t}-\bbx^{t}\|^2_{\Fro}.
\end{align*}
\end{proof}

Having these three lemmata in place, we are ready to prove the main result.
 \begin{proof}[Proof of Theorem~\ref{th:main}]
Taking into account Lemma~\ref{lem:distToRunav} and Lemma~\ref{lem:grad}, we conclude that,  under conditions of the theorem, we have
 \begin{align*}
  \|\bx^{t+1} - \bbx^{t+1}\|_{\Fro}&\le\sigma\|\bx^{t} - \bbx^{t}\|_{\Fro} + \alpha\frac{\sqrt {n-1}}{\sqrt{n}}\|\tbF(\bx^{t})\|_{\Fro},\cr
  \|\tbF(\bx^{t})\|_{\Fro}&\le L\|\bx^{t}-\bx^*\|_{\Fro}.\cr
 \end{align*}
The inequalities above together with 
\[\|\bx^{t}-\bx^*\|_{\Fro}\le\|\bx^{t}-\bbx^t\|_{\Fro}+\|\bbx^{t}-\bx^*\|_{\Fro}\]
imply that 
\begin{align}\label{eq:ineq1}
 \|\bx^{t+1} - \bbx^{t+1}\|_{\Fro}&\le \sigma\|\bx^{t} - \bbx^{t}\|_{\Fro} \cr
 &\qquad+ \alpha \frac{\sqrt {n-1}}{\sqrt{n}}L\|\bx^{t}-\bx^*\|_{\Fro}\cr
 & \le(\sigma+\alpha\frac{\sqrt {n-1}}{\sqrt{n}} L)\|\bx^{t} - \bbx^{t}\|_{\Fro} \cr
 &\qquad+ \alpha\frac{\sqrt {n-1}}{\sqrt{n}} L \|\bbx^{t}-\bx^*\|_{\Fro}.
\end{align}
Next, we apply to \eqref{eq:ineq1} the standard inequality $(a+b)^2 \le (1+\beta)a^2 + \frac{1+\beta}{\beta}b^2$, which holds for any real numbers $a,b\in\R$ and any $\beta>0$.
By taking $a = (\sigma+\alpha \frac{\sqrt {n-1}}{\sqrt{n}}L)\|\bx^{t} - \bbx^{t}\|_{\Fro}$ and $b = \alpha \frac{\sqrt {n-1}}{\sqrt{n}}L \|\bbx^{t}-\bx^*\|_{\Fro}$, we get
\begin{align}\label{eq:ineq2}
 \|\bx^{t+1} - \bbx^{t+1}\|&^2_{\Fro}\le ((\sigma+\alpha \frac{\sqrt {n-1}}{\sqrt{n}}L)\|\bx^{t} - \bbx^{t}\|_{\Fro} \cr
 &\qquad\qquad+ \alpha\frac{\sqrt {n-1}}{\sqrt{n}} L \|\bbx^{t}-\bx^*\|_{\Fro})^2\cr
 &\le (1+\beta)(\sigma+\alpha\frac{\sqrt {n-1}}{\sqrt{n}} L)^2\|\bx^{t} - \bbx^{t}\|^2_{\Fro} \cr
 &\quad+ \frac{1+\beta}{\beta}\alpha^2 \frac{n-1}{n}L^2 \|\bbx^{t}-\bx^*\|^2_{\Fro}.
\end{align}
Moreover, Lemma~\ref{lem:distToNE} with the choice $\theta=\mu$ implies
\begin{align}\label{eq:ineq3}
 \|\bbx^{t+1}-\bx^*\|^2_{\Fro}\le \gamma\|\bbx^{t}-\bx^*\|^2_{\Fro} + \gamma\frac{2L^2\alpha}{\mu}\|\bx^{t}-\bbx^{t}\|^2_{\Fro},
\end{align}
where $\gamma = \frac{1}{1+\frac{\mu\alpha}{n}}$.
Let $\bz^t = (\|\bbx^{t}-\bx^*\|^2_{\Fro}, \|\bx^{t} - \bbx^{t}\|^2_{\Fro})$. Then taking \eqref{eq:ineq2} and \eqref{eq:ineq3} into account, we conclude that 
\begin{align}\label{eq:dynamics}
 \bz^{t+1} \le Z(\alpha, \mu,\beta)\bz^t,
\end{align}
where 
\begin{align*}
 Z = \begin{pmatrix} 
\gamma &  \gamma\frac{2L^2\alpha}{\mu}\\
\frac{1+\beta}{\beta}\frac{n-1}{n}\alpha^2 L^2 & (1+\beta)(\sigma+\alpha \frac{\sqrt {n-1}}{\sqrt{n}}L)^2
\end{pmatrix}.
\end{align*}
We proceed with analysis of the properties of the positive matrix $Z=Z(\alpha, \mu,\beta)$. 
First, we calculate its eigenvalues. Its characteristic polynomial is 
\begin{align*}
 p_{Z}(\lambda) = \left(\lambda - \gamma\right)&\left(\lambda - (1+\beta)(\sigma+\alpha\frac{\sqrt {n-1}}{\sqrt{n}} L)^2\right) \cr
 &- \gamma\frac{n-1}{n}\frac{2\alpha^3}{\mu}\frac{1+\beta}{\beta} L^4.
\end{align*}
We need to solve $p_{Z}(\lambda) = 0$, namely
\begin{align*}
 \lambda^2 &- (\gamma + (1+\beta)(\sigma+\alpha \frac{\sqrt {n-1}}{\sqrt{n}}L)^2)\lambda \cr
 &+ \gamma(1+\beta)(\sigma+\alpha \frac{\sqrt {n-1}}{\sqrt{n}}L)^2 \cr
 &-\gamma\frac{n-1}{n}\frac{2\alpha^3}{\mu}\frac{1+\beta}{\beta} L^4 =0.
\end{align*}

This results in 
\begin{align*}
 \lambda_{1,2} = \frac{\gamma + (1+\beta)(\sigma+\alpha\frac{\sqrt {n-1}}{\sqrt{n}} L)^2\pm \sqrt D}{2},
\end{align*}
where
\begin{align}\label{eq:discr}
 &D = (\gamma + (1+\beta)(\sigma+\alpha\frac{\sqrt {n-1}}{\sqrt{n}} L)^2)^2 \cr
 &- 4\left[\gamma(1+\beta)(\sigma+\alpha \frac{\sqrt {n-1}}{\sqrt{n}}L)^2 -\gamma\frac{n-1}{n}\frac{2\alpha^3}{\mu}\frac{1+\beta}{\beta} L^4\right]\cr
 &=(\gamma - (1+\beta)(\sigma+\alpha\frac{\sqrt {n-1}}{\sqrt{n}} L)^2)^2\cr
 &\qquad\qquad\qquad\qquad+8\gamma\frac{n-1}{n}\frac{\alpha^3}{\mu}\frac{1+\beta}{\beta} L^4.
\end{align}
We let $\lambda_1$ denote the largest (positive) eigenvalue.
Borrowing the idea from the proof of Lemma~7 and Lemma~17 in \cite{NestDistrOpt}, we notice that, due to properties of the diagonalization, any element of the matrix $Z^t$ is in the form $z^{ij}_t = z_t = c_1\lambda_1^t + c_2\lambda_2^t$, $i,j=1,2$, for some $c_1,c_2\in\mathbb C$ (we omit the element upper index in  $z_t^{ij}$ to simplify notations). To estimate $c_1,c_2$ for each element, we construct the following system of linear equalities:
\begin{align*}
\begin{cases}
 c_1 + c_2 &= z_0,\\
c_1 \lambda_1 + c_2 \lambda_2 &= z_1,
\end{cases}
\end{align*}
where $z_0$ and $z_1$ are the corresponding elements of the matrices $Z^0$ and $Z$ respectively. The solution of the system above is 
\begin{align}
 c_1 &= \frac{z_1 - z_0\lambda_2}{\lambda_1 - \lambda_2},\cr
 c_2 &= \frac{z_0\lambda_1 - z_1}{\lambda_1 - \lambda_2}.
\end{align}
Thus, 
\begin{align}\label{eq:ineq4}
 z_t& = \frac{z_1\lambda_1^t - z_0\lambda_2\lambda_1^t}{\lambda_1 - \lambda_2} + \frac{z_0\lambda_1\lambda_2^t - z_1\lambda_2^t }{\lambda_1 - \lambda_2}\cr
 &=\frac{z_1(\lambda_1^t-\lambda_2^t) -z_0\lambda_2\lambda_1(\lambda_1^{t-1} - \lambda_2^{t-1})}{\lambda_1 - \lambda_2}\cr
 &\le \frac{z_1(\lambda_1^t-\lambda_2^t)}{\lambda_1 - \lambda_2}\le 2z_1\frac{\lambda_1^t}{\lambda_1 - \lambda_2},
\end{align}
where in the last two inequalities we used Perron-Frobenius Theorem for positive matrices, namely $\lambda_1>|\lambda_2|$ (see Theorem~8.2.11 in \cite{MatrixAnalysis}).
As 
\[\|\bx^{t+1} - \bx^*\|^2_{\Fro}\le 2\|\bbx^{t+1} - \bx^*\|^2_{\Fro} +  2\|\bx^{t+1} - \bbx^{t+1}\|^2_{\Fro} ,\]
and by taking into account \eqref{eq:dynamics} and \eqref{eq:ineq4}, we conclude that 
\begin{align}\label{eq:distToOpt}
 \|\bx^{t+1} - \bx^*\|^2_{\Fro}&\le 2z_t^{11}\|\bbx^0 - \bx^*\|^2_{\Fro}+2z_t^{12}\|\bx^0 - \bbx^0\|^2_{\Fro}\cr
 & \quad + 2z_t^{21}\|\bbx^0 - \bx^*\|^2_{\Fro}+2z_t^{22}\|\bx^0 - \bbx^0\|^2_{\Fro}\cr
 &\le\lambda_1^t\frac{4}{\lambda_1 - \lambda_2}[ (z_1^{11}+z_1^{21})\|\bbx^0 - \bx^*\|^2_{\Fro} \cr
 &\qquad\qquad\qquad+ (z_1^{12}+z_1^{22})\|\bx^0 - \bbx^0\|^2_{\Fro}].
\end{align}
In \eqref{eq:distToOpt} $z_1^{ij}$ is the $ij$th element of the matrix $Z^1=Z$.
Thus, to get the result it suffices to demonstrate that $\lambda_1<1$.
Recall that
\begin{align}\label{eq:lambda1}
 \lambda_{1} = \frac{\gamma + (1+\beta)(\sigma+\frac{\sqrt {n-1}}{\sqrt{n}}\alpha L)^2 + \sqrt{D}}{2},
\end{align}
where
\begin{align*}
 D = (\gamma &- (1+\beta)(\sigma+\alpha\frac{\sqrt {n-1}}{\sqrt{n}} L)^2)^2\cr
 &+8\gamma\frac{n-1}{n}\frac{\alpha^3}{\mu}\frac{1+\beta}{\beta} L^4.
\end{align*}

Let us now fix $\beta = \frac12\left(\frac{1}{\sigma^2}-1\right)$.
As 
\begin{align*}
 \alpha&<\frac{\sigma}{2L}\frac{\sqrt {n}}{\sqrt{n-1}}\left(\frac{\sqrt 2}{\sqrt{1+\sigma^2}}-1\right) \cr
 &= \frac{1}{2L}\frac{\sqrt {n}}{\sqrt{n-1}}\left(\frac{1}{\sqrt{1+\beta}}-\sigma\right),
\end{align*}
we conclude that
\begin{align}\label{eq:ineq5}
 (1+\beta)&(\sigma+\alpha\frac{\sqrt {n-1}}{\sqrt{n}} L)^2 \cr
 &<  (1+\beta)\left(\sigma+\frac{1}{2}\left(\frac{1}{\sqrt{1+\beta}}-\sigma\right)\right)^2\cr
 &=\frac{1}{4}(\sigma\sqrt{1+\beta}-1)^2.
\end{align}
As $\alpha<\frac{n}{\mu}\left(\frac{8}{(\sqrt{1+\sigma^2}-\sqrt 2)^2}-1\right)=\frac{n}{\mu}\left(\frac{4}{(\sigma\sqrt{1+\beta}-1)^2}-1\right)$, we conclude that 
\begin{align}\label{eq:ineq6}
 \gamma = \frac{1}{\left(1+\frac{\mu\alpha}{n}\right)}\ge\frac{1}{4}(\sigma\sqrt{1+\beta}-1)^2.
\end{align}
Bringing \eqref{eq:ineq5} and \eqref{eq:ineq6} together, we obtain
\[(1+\beta)(\sigma+\alpha\frac{\sqrt {n-1}}{\sqrt{n}} L)^2 < \gamma\]
and, thus, from \eqref{eq:lambda1}
\begin{align}\label{eq:lambda11}
 \lambda_{1} &< \gamma  + \sqrt{\gamma\frac{2\alpha^3}{\mu}\frac{1+\beta}{\beta}\frac{n-1}{n} L^4} \cr
 &= \frac{n}{n+\mu\alpha}  + \sqrt{\frac{n-1}{n+\mu\alpha}\frac{2\alpha^3}{\mu}\frac{1+\sigma^2}{1-\sigma^2} L^4}.
\end{align}
Next, taking into account that $\alpha<\frac{\sqrt{n^2+\frac{2\mu^4(1-\sigma^2)}{(n-1)L^4(1+\sigma^2)}}-n}{2\mu}$ and \eqref{eq:lambda11}, we conclude that\footnote{More details can be found in Appendix.}
\[\lambda_{1}<1.\]
Finally, according to \eqref{eq:distToOpt},
\[\|\bx^{t+1} - \bx^*\|^2_{\Fro}\le O(q^t),\]
where
\begin{align}\label{eq:final_q}
q(\alpha) = \lambda_{1} = \frac{n}{2(n+\mu\alpha)} &+ \frac{(1+\sigma^2)(\sigma+\alpha\frac{\sqrt {n-1}}{\sqrt{n}} L)^2}{4\sigma^2} \cr
&\quad + \frac{\sqrt{D}}{2},
\end{align}
where
\begin{align*}
 D = &\left(\frac{n}{n+\mu\alpha} - \frac{1}{2}(1+\frac{1}{\sigma^2})(\sigma+\alpha\frac{\sqrt {n-1}}{\sqrt{n}} L)^2\right)^2 \cr
 &\qquad\qquad\qquad+ 8\frac{n-1}{n+\mu\alpha}\frac{\alpha^3}{\mu}\frac{1+\sigma^2}{1-\sigma^2} L^4.
\end{align*}
\end{proof}

\section{Comparison with the convergence rate of the GRANE}\label{sec:algorithms}
In this section we compare the convergence rate of the algorithm \eqref{eq:alg} analyzed in this paper and the convergence rate of the GRANE procedure studied in \cite{GRANE}, given some large number of players $n$.

According to Theorem 9 in \cite{GRANE} under Assumptions~\ref{assum:convex}-\ref{assum:connected} made above, the GRANE converges to the Nash equilibrium with the rate $O\left(\left(1-\frac{1}{\gamma_r^2}\right)^t\right)$, where $\gamma_r = \frac{L_{\bF_a}}{\mu_{r,\bF_a}}>1$ and the constants $L_{\bF_a}, \mu_{r,\bF_a}$ are defined in Lemmas 1 and 3 respectively\footnote{The constant $L_{\bF} = \max_i\{\sqrt{L_i^2+L_{-i}^2}\}$ defined in Lemma 3 in \cite{GRANE} corresponds to the constant $L=\max_i L_i$, where $L_i$s are defined in Assumption~\ref{assum:Lipschitz} in this paper.}. After substituting the expressions for $L_{\bF_a}, \mu_{r,\bF_a}$ into $\gamma_r$, we conclude that for a sufficiently large~$n$
\[\gamma_r = 2n\left[\frac{L}{\mu} + \frac{\sigma_{\max}\{I-W\}}{\alpha^0 \mu}\right].\]
Next, according to Remark 4 in \cite{GRANE}, 
\[\alpha^0<\frac{\lminnz{I-W}}{L\left(1+\frac{1}{\beta^2}\right)}, \quad \mbox{where $\beta\sim\frac{\mu}{nL}$. }\]

Thus, given the optimal choice of $\alpha^0$, we get 
\[\gamma_r = 2n\left[\frac{L}{\mu} + \frac{L/{\mu}\left(1+\frac{n^2L^2}{\mu^2}\right)\sigma_{\max}\{I-W\}}{\lminnz{I-W}}\right].\]
Thus, the convergence rate of the GRANE is 
\begin{align}\label{eq:RateGRANE}
 O\left(\left(1-\frac{\mu^6}{L^6n^6}\right)^t\right).
\end{align}

Now we proceed with the convergence rate estimation of the algorithm \eqref{eq:alg}.
According to the proof of Theorem~\ref{th:main}, the convergence rate of the distributed procedure is $O(q(\alpha)^t)$, where (see \eqref{eq:lambda11}) 
\[q(\alpha)< \frac{n}{n+\mu\alpha}  + \sqrt{\frac{n-1}{n+\mu\alpha}\frac{2\alpha^3}{\mu}\frac{1+\sigma^2}{1-\sigma^2} L^4}.\]
The constant $q$ above is less than $1$, if 
\begin{align*}
\alpha&<\frac{\sqrt{n^2+\frac{2\mu^4(1-\sigma^2)}{(n-1)L^4(1+\sigma^2)}}-n}{2\mu} \cr
&= \frac{n}{2\mu}\left(1+\frac{2\mu^4(1-\sigma^2)}{(n-1)n^2L^4(1+\sigma^2)}\right)^{1/2}-\frac{n}{2\mu}\cr
&\sim\frac{\mu^3(1-\sigma^2)}{2n(n-1)L^4(1+\sigma^2)}. 
\end{align*}
Thus, taking into account two inequalities above, we conclude that for a sufficiently large $n$
\begin{align}\label{eq:rateNEW}
 q(\alpha\sim&\frac{\mu^3(1-\sigma^2)}{2n(n-1)L^4(1+\sigma^2)})< O\left(\frac{n}{n+\mu\alpha}\right) \cr&= O\left((1+\frac{\mu\alpha}{n})^{-1}\right)= O\left(1-\frac{\mu\alpha}{n}\right)\cr
 &=O\left(1-\frac{\mu^4}{L^4n^2(n-1)}\right).
\end{align}

Next, let us notice that, under Assumption~\ref{assum:Lipschitz}, the game mapping $\bF$ defined in \eqref{eq:gamemapping} is Lipschitz continuous with the constant $L^{\bF} = L\sqrt n$. Indeed, due to Assumption~\ref{assum:Lipschitz}, 
\begin{align*}
\|\bF(x)-\bF(y)\|^2 &= \sum_{i=1}^n(\nabla_i J_i(x) - \nabla_i J_i(y))^2\cr
&\le \sum_{i=1}^nL^2_i\|x - y\|^2\le nL^2\|x - y\|^2.
\end{align*}
Thus, the condition number of the mapping $\bF$ is 
\begin{align}\label{eq:condNumber}
 \frac{L^{\bF}}{\mu} = \frac{L\sqrt n}{\mu}\ge1.
\end{align}

By comparing \eqref{eq:RateGRANE} and \eqref{eq:rateNEW} and taking into account \eqref{eq:condNumber}, we conclude that the convergence rate of the proposed algorithm \eqref{eq:alg} is faster than that of the GRANE presented in \cite{GRANE}.

\section{Simulation}\label{sec:sim}
Let us consider a class of games  with strongly monotone game mappings. Specifically, we have players $\{1,2,\ldots,20\}$ and each player $i$'s objective is to minimize the cost function $J_i(x_i,x_{-i})=f_i(x_i)+l_i(x_{-i})x_i$, where $f_i(x_i)=0.5a_ix_i^2+b_ix_i$ and $l_i(x_{-i})=\sum_{j\neq i}c_{ij}x_j$. The local cost function is in general dependent on actions of all players, but the underlying communication graph is a randomly generated tree graph. We randomly select $a_i$, $b_i$, and $c_{ij}$ for all possible $i$ and $j$. 

We simulate the proposed gradient play algorithm and compare its implementation with the implementation of the algorithm GRANE presented in \cite{GRANE} (see Figure~\ref{eps:compare}). The GRANE is based on a so called augmented game mapping, for which an additional parameter
has to be chosen to guarantee specific properties of this mapping and, thus, convergence of the procedure. Note that the GRANE is very sensitive to the setting of this parameter.
We chose this parameter based on the theoretic results in \cite{GRANE}. For the gradient play we chose the step size parameter $\alpha$ based on Theorem~\ref{th:main} (in the presented simulation $\alpha=0.05$). As we can see, the gradient play outperforms the GRANE. 
\begin{figure}[!htb]
	\centering
	\psfrag{0}[c][l]{\small$0$}
	\psfrag{1}[c][c]{\small$1$}
	\psfrag{0.8}[c][c]{\small$0.8$}
	\psfrag{0.6}[c][c]{\small$0.6$}
	\psfrag{0.4}[c][c]{\small$0.4$}
	\psfrag{0.2}[c][c]{\small$0.2$}
	\psfrag{200}[c][c]{\small$200$}
	\psfrag{400}[c][c]{\small$400$}
	\psfrag{600}[c][c]{\small$600$}
	\psfrag{800}[c][c]{\small$800$}
	\psfrag{1000}[c][c]{\small$1000$}
	\psfrag{R}[c][l]{\small{Relative Error}}
	\psfrag{t}[c][c]{time}
        \includegraphics[width=0.5\textwidth]{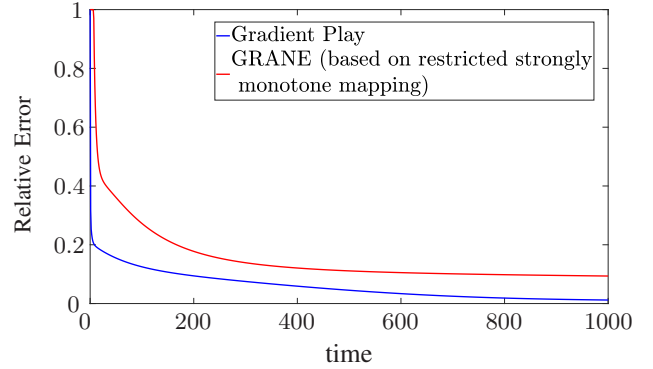}
        \caption{\label{eps:compare}Comparison of the presented algorithm and GRANE based on restricted strongly monotone augmented mapping}
        \end{figure}
        
\section{conclusion}\label{sec:conclusion}
In this paper, we have presented the distributed gradient play which provably converges to a Nash equilibrium in strongly convex games with unconstrained action sets. In comparison to the GRANE algorithm \cite{GRANE}, which possesses a geometric convergence rate as well, the proposed algorithm requires only one parameter (the step size) to be appropriately choose. Moreover, its convergence rate is shown to be faster given some fixed game under consideration. 
The future work can be devoted to investigation of the convergence rate of the gradient projected play in the case of bounded closed agents' action sets.
        
\bibliographystyle{plain}
\bibliography{document}

\vspace{1cm}
\textbf{Appendix}

Here we demonstrate that $\frac{n}{n+\mu\alpha}  + \sqrt{\frac{n-1}{n+\mu\alpha}\frac{2\alpha^3}{\mu}\frac{1+\sigma^2}{1-\sigma^2} L^4}<1$ (see \eqref{eq:lambda11}) if $0<\alpha<\frac{\sqrt{n^2+\frac{2\mu^4(1-\sigma^2)}{(n-1)L^4(1+\sigma^2)}}-n}{2\mu}$. 

Indeed, 
$$\frac{n}{n+\mu\alpha}  + \sqrt{\frac{n-1}{n+\mu\alpha}\frac{2\alpha^3}{\mu}\frac{1+\sigma^2}{1-\sigma^2} L^4}<1$$
$$\Updownarrow$$
$$\sqrt{(n-1)\frac{2\alpha}{\mu^3}\frac{1+\sigma^2}{1-\sigma^2} L^4}<\frac{1}{\sqrt{n+\mu\alpha}}$$
$$\Updownarrow$$
$$\alpha c(n+\mu\alpha)-1 = c\mu\alpha^2+nc\alpha-1<0,$$
where $c = \frac{2(n-1)}{\mu^3}\frac{1+\sigma^2}{1-\sigma^2} L^4$.
The last inequality above holds, if 
$$0<\alpha<\frac{-n+\sqrt{n^2+\frac{4\mu}{c}}}{2\mu}.$$
By substituting $c$ in the expression above, we obtain 
$$0<\alpha<\frac{\sqrt{n^2+\frac{2\mu^4(1-\sigma^2)}{(n-1)L^4(1+\sigma^2)}}-n}{2\mu}.$$
\end{document}